\numberwithin{equation}{section}
\numberwithin{figure}{section}
\numberwithin{table}{section}
\theoremstyle{plain}
\newtheorem{thm}{\protect\theoremname}[section]
\theoremstyle{definition}
\newtheorem{example}[thm]{\protect\examplename}
\theoremstyle{remark}
\newtheorem{rem}[thm]{\protect\remarkname}
\theoremstyle{definition}
\newtheorem{defn}[thm]{\protect\definitionname}
\theoremstyle{plain}
\newtheorem{cor}[thm]{\protect\corollaryname}
\theoremstyle{definition}
\newtheorem{notat}[thm]{\protect\notationname}
\theoremstyle{plain}
\newtheorem{lem}[thm]{\protect\lemmaname}
\subjclass[2020]{14-04, 14M25, 14N35, 14N10}
\providecommand{\definitionname}{Definition}
\providecommand{\examplename}{Example}
\providecommand{\remarkname}{Remark}
\providecommand{\theoremname}{Theorem}
\providecommand{\corollaryname}{Corollary}
\providecommand{\notationname}{Notation}
\providecommand{\lemmaname}{Lemma}
\begin{document}
\global\long\def\d{\mathrm{d}}%
\global\long\def\ev{\mathrm{ev}}%
\global\long\def\ctopT{c_{\mathrm{top}}^{T}}%
\global\long\def\ctop{c_{\mathrm{top}}}%
\global\long\def\ch{\mathrm{ch}}%
\global\long\def\td{\mathrm{td}}%

\title{Computations of Gromov--Witten invariants of toric varieties}
\author{Giosu{\`e} Muratore}
% \author{Giosu{\`e} Muratore and Csaba Schneider}
% \date{\today}
\begin{abstract}
We present the Julia package \verb!ToricAtiyahBott.jl!, providing an easy way to perform the Atiyah--Bott formula on the moduli space of genus $0$ stable maps $\overline{M}_{0,m}(X,\beta)$ where $X$ is any smooth projective toric variety, and $\beta$ is any effective $1$-cycle. The list of the supported cohomological cycles contains the most common ones, and it is extensible. We provide a detailed explanation of the algorithm together with many examples and applications.
The toric variety $X$, as well as the cohomology class $\beta$, must be defined using the package \verb!Oscar.jl!.
\end{abstract}

\address{CMAFcIO, Faculdade de Ci\^{e}ncias da ULisboa, Campo Grande 1749-016 Lisboa,
Portugal}
% \address{Department of Mathematics, Universidade Federal de Minas Gerais, Belo Horizonte, MG, Brazil.}
\email{\href{mailto:muratore.g.e@gmail.com}{muratore.g.e@gmail.com}
% \href{mailto:csaba.schneider@gmail.com}{csaba.schneider@gmail.com}
}
\urladdr{\url{https://sites.google.com/view/giosue-muratore}}
\keywords{Toric varieties, Julia, Gromov--Witten, scientific computing, stable maps, rational curve}
\maketitle

\section{Introduction}
The Gromov--Witten invariants of a variety $X$ are the degrees of certain cycles in the moduli space of stable maps $\overline{M}_{g,m}(X,\beta)$, where $\beta\in H_2(X,\mathbb Z)$ is an effective class. They are widely used to solve a large class of enumerative problems. For this reason, there is a great interest in finding explicit algorithms to compute Gromov--Witten invariants. Some algorithms are very efficient and can be applied to a wide range of cases (see, for example, the recent articles \cite{MR4575868,hu1,hu2}). 

In this article, we present an implementation of the Atiyah--Bott formula to compute \textit{any} Gromov--Witten invariant where $g=0$ and $X$ is a smooth projective toric variety. Our main references are \cite{MR1692603,MR3184181}.
%It is a direct generalization of our previous package \cite{MR4383164}.

In Section \ref{sec:2} we recall the main properties of toric varieties and Gromov--Witten invariants. In Section \ref{sec:3} we describe the algorithm together with the equivariant classes of the most common cohomological cycles. In the last section, we present some explicit computations and show some possible applications.

Our package is written in Julia (\cite{bezanson2017julia}) and is freely available at:
\begin{center} \url{https://github.com/mgemath/ToricAtiyahBott.jl} \end{center}
It requires \verb|Oscar.jl| (\cite{OSCAR,OSCAR-book}), in particular its toric geometry part (see \cite{bies}). We think that our package could be a useful tool to the mathematical community.

\noindent {\bf Acknowledgments.}
The author learned about toric varieties in the school BAGS in Nancy in 2018. He is truly grateful to the organizers, in particular to Gianluca Pacienza. 
Moreover, he thanks Csaba Schneider and Xiaowen Hu for mathematical support. Finally, he thanks Martin Bies, Janko Boehm, Lars Kastner, and Yue Ren for their help with \verb|Oscar.jl|. The author is a member of GNSAGA (INdAM). 
This project is supported by FCT - Funda\c{c}\~{a}o para a Ci\^{e}ncia e a Tecnologia,
under the project: UIDP/04561/2020  (\url{https://doi.org/10.54499/UIDP/04561/2020}).

% The second author acknowledges the 
% support of the CNPq projects {\em Produtividade em Pesquisa} (project no.: 308212/2019-3)  
% and {\em Universal} (project no.: 421624/2018-3).

\section{Gromov--Witten invariants on toric varieties}\label{sec:2}
In this section we briefly review toric varieties and Gromov--Witten invariants, in order to fix notation.
\subsection{Toric varieties}
A toric variety $X$ is an algebraic variety containing a torus $T=(\mathbb C^*)^n$ as a dense open subset, in such a way that the action of $T$ on itself extends to the whole $X$. The classification of toric varieties of dimension $n$ is equivalent to the classification of fans of cones in $\mathbb Z^n$. In particular, for each toric variety there is a fan $\Sigma$, uniquely defined modulo isomorphism. We refer to \cite{MR2810322} for a complete account on the correspondence between fans $\Sigma$ and toric varieties $X=X_\Sigma$.
In the following examples, we denote by $\{e_1,\ldots,e_n\}$ the standard generators of $\mathbb Z^n$.
%, and by  $e_0:=\sum_{i=1}^n-e_i$.
\begin{example}\label{example:P}
    The projective space $\mathbb P^n$ is a toric variety with maximal cones $\{\sigma_0,\ldots,\sigma_n\}$ where $\sigma_i$ is the cone generated by the rays $\{e_0,\ldots,\hat{e}_i,\ldots,e_n\}$ and $e_0:=\sum_{i=1}^n-e_i$. 
\end{example}
Some natural constructions (direct product, blow-up along a toric invariant subvariety,...) are stable in the toric context. If $\mathcal{E}$ is a vector bundle of a smooth complete toric variety, then the projectivization $\mathbb P(\mathcal{E})$ is toric if and only if $\mathcal{E}$ is the direct sum of line bundles \cite[Proposition~A.2]{MR3491049}. The general strategy to find the fan of $\mathbb P(\mathcal{E})$ is described in \cite[Chapter~1 Section~7]{zbMATH03648907}.
%In this case, the fan of $\mathbb P(\mathcal{E})$ can be found explicitly. We refer to \cite[Chapter 1, Section 7]{zbMATH03648907} for the construction of the following examples.
\begin{example}\label{example:P(P)}
    Any Hirzebruch surface $\mathbb F_m=\mathbb P(\mathcal{O}_{\mathbb P^1}\oplus\mathcal{O}_{\mathbb P^1}(m))$ is toric. The maximal cones of its fan are $$\{\{e_1,e_2\},\{e_1,-e_2\},\{e_2,-e_1-me_2\},\{-e_2,-e_1-me_2\}\}.$$
    Analogously, $\mathbb P(\mathcal{O}_{\mathbb P^2}\oplus\mathcal{O}_{\mathbb P^2}(m))$ is toric. The maximal cones of its fan are 
    $$\bigcup_{\mathfrak e=\pm e_3}\{
    \{e_1,e_2,\mathfrak e\},
    \{e_2,\mathfrak e,-e_1-e_2-me_3\},
    \{e_1,\mathfrak e,-e_1-e_2-me_3\}\}.$$
\end{example}

% \begin{example}\label{example:P(F)}
%     Let $\mathcal{O}_{\mathbb F_0}(1,0)$ and $\mathcal{O}_{\mathbb F_0}(0,1)$ be the canonical generators of the Picard group of the Hirzebruch surface $\mathbb F_0\cong\mathbb P^1\times \mathbb P^1$. The maximal cones of the fan of $\mathbb P(\mathcal{O}_{\mathbb F_0}(m_1,0)\oplus\mathcal{O}_{\mathbb F_0}(0,m_2))$ are
%     $$\bigcup_{\mathfrak e=\pm e_3}\{
%     \{e_1,e_2,\mathfrak e\},
%     \{e_1, \rho_2,\mathfrak e\},
%     \{\rho_1, e_2,\mathfrak e\},
%     \{\rho_1, \rho_2,\mathfrak e\}\},$$
%     where $\rho_i:=-e_i-(-1)^im_ie_3$ for $i=1,2$.
% \end{example}
\begin{notat}\label{notat:toric}
    From now on, for any toric variety $X$ relative to a fan $\Sigma$, we use the following notation.
    \begin{itemize}
        \item $\Sigma^{(k)}$ is the subset of $k$-dimensional cones of $\Sigma$. So, $\Sigma^{(n)}$ are the maximal cones and they are usually denoted by $\sigma$.
        \item For each $\sigma\in\Sigma^{(n)}$, we denote by $\sigma^*$ the set of all $\sigma'\in\Sigma^{(n)}$ such that $\sigma\cap\sigma'\in\Sigma^{(n-1)}$.
        \item $\Sigma^{(1)}=\{\rho_1,\ldots,\rho_r\}$ are the $1$-dimensional cones (rays) of $\Sigma$.
        \item For each cone $\tau\in\Sigma^{(k)}$, $V(\tau)$ is the $T$-invariant subvariety of dimension $(n-k)$ defined by $\tau$.
    \end{itemize}
\end{notat}
\subsection{Moduli space of maps}
Let us briefly recall the main properties of the moduli space of stable maps. The toric variety $X$ is smooth and projective.
\begin{itemize}
\item $\overline{M}_{0,m}(X,\beta)$ is the coarse moduli space of genus
$0$ stable maps to $X$ of class $\beta$ with $m$ marked points.
\item $\ev_{j}\colon \overline{M}_{0,m}(X,\beta)\rightarrow X$
is the evaluation map at the $j^{\mathrm{th}}$ marked point.
\item $\delta\colon \overline{M}_{0,m+1}(X,\beta)\rightarrow\overline{M}_{0,m}(X,\beta)$
is the forgetful map of the last point.
\end{itemize}

Usually, the moduli points are denoted by $(C,f,p_1,\ldots,p_m)$ where $f\colon C\rightarrow X$ is the morphism and $p_1,\ldots,p_m\in C$ are the marked points. We denote by $\mathbb L_i$ the line bundle whose fiber at $(C,f,p_1,\ldots,p_m)$ is $\Omega_{C,p_i}$. Finally, $\psi_i$ is the first Chern class of $\mathbb L_i$. The virtual dimension of $\overline{M}_{0,m}(X,\beta)$ is
\begin{equation}\label{eq:virdim}
\mathrm{virdim}(\overline{M}_{0,m}(X,\beta)) = \dim(X) - \deg(c_1(\Omega_X)\cdot \beta) +m-3.
\end{equation}
Given a vector bundle $\mathcal{E}$ or rank $r$, cohomology classes $\gamma_1,\ldots,\gamma_m\in H^*(X,\mathbb C)$, and non negative integers $a_1,\ldots,a_m$, we define a Gromov--Witten invariant to be the following degree of a cohomology class of maximal codimension:
\begin{equation}\label{eq:GW}
    \int_{\overline{M}_{0,m}(X,\beta)}\ev_{1}^{*}(\gamma_{1})\cdots\ev_{m}^{*}(\gamma_{m})\cdot \psi_1^{a_1}\cdots\psi_m^{a_m}\cdot c_r(\mathcal{E}).
\end{equation}
The action of $T$ on $X$ lifts to $\overline{M}_{0,m}(X,\beta)$ by composition. The fixed point locus of this new action is the disjoint union of subvarieties of $\overline{M}_{0,m}(X,\beta)$ parameterized by all possible decorated graphs of $\overline{M}_{0,m}(X,\beta)$.

\begin{defn}
    A decorated graph of $\overline{M}_{0,m}(X,\beta)$ is the isomorphism class of the tuple $\Gamma=(g, \mathbf{c}, \mathbf{w}, \nu)$, such that
    \begin{enumerate}
\item $g$ is a tree; that is $g$ is a simple undirected, connected graph without cycles. We denote by $V_g$ and $E_g$ the sets of vertices and edges of $g$.
\item The vertices of $g$ are colored by the maximal cones of $\Sigma$; the coloring is given by a map $\mathbf c\colon V_g \rightarrow \Sigma^{(n)}$ such that: if $v$ and $v'$ are two vertices in the same edge, then ${\mathbf{c}(v)}\cap{\mathbf{c}(v')}\in\Sigma^{(n-1)}$.
\item Each edge $e=(v,v')$ is weighted by a map $\mathbf w\colon E_g\rightarrow \mathbb Z$, such that $\mathbf w(e)>0$ and $\sum_{e\in E_g} \mathbf w(e)\cdot [V({\mathbf{c}(v)}\cap{\mathbf{c}(v')})]=\beta$.
\item The marks of the vertices of $g$ are given by a map $\nu\colon A\rightarrow V_{g}$, where $A=\{1,\ldots, m\}$ if $m>0$, and $A=\emptyset$ if $m=0$.
    \end{enumerate}
\end{defn}
An automorphism of $\Gamma$ is an automorphism of $g$ compatibles with $\mathbf{c},\mathbf{w},\nu$.
    We denote by $a_\Gamma$ the integer
    $$a_\Gamma:=|\mathrm{Aut}(\Gamma)|\prod_{e\in E_g}\mathbf w(e),$$
    and by $F_\Gamma$ the fixed point locus relative to $\Gamma$.

Following \cite{AB}, given a cohomology class $\alpha\in H^*(\overline{M}_{0,m}(X,\beta),\mathbb C)$, we denote by 
 $\alpha^T(\Gamma)$ 
the $T$-equivariant cohomology class induced by $\alpha$. Finally, we have the integration formula.
\begin{thm} Let $P$ be a symmetric polynomial in Chern classes of equivariant vector bundle of $\overline{M}_{0,m}(X,\beta)$. Then 
    \begin{equation}\label{eq:ABt}
\int_{\overline{M}_{0,m}(X,\beta)}P=\sum_{\Gamma}\frac{1}{a_{\Gamma}}\frac{P^{T}(\Gamma)}{\ctopT(N_{\Gamma})(\Gamma)},
\end{equation}
where $N_\Gamma$ is the normal tangent bundle of $F_\Gamma$.
\end{thm}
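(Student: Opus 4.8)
The plan is to derive \eqref{eq:ABt} from the (virtual) Atiyah--Bott localization theorem \cite{AB,MR1692603,MR3184181} applied to the induced $T$-action on $\overline M_{0,m}(X,\beta)$, so that the real content is the translation of the fixed-point data and of the normal bundle into the combinatorics of decorated graphs. \textbf{Step 1 (fixed loci).} Since the $T$-fixed points of $X$ are exactly the points $x_\sigma$ with $\sigma\in\Sigma^{(n)}$, and the $T$-invariant irreducible curves are exactly the $\mathbb P^1$'s $V(\tau)$ with $\tau\in\Sigma^{(n-1)}$, a stable map $(C,f,p_1,\dots,p_m)$ represents a $T$-fixed point of $\overline M_{0,m}(X,\beta)$ precisely when every component of $C$ is either contracted by $f$ to some $x_\sigma$ or maps onto some $V(\tau)$ by a cover totally ramified over the two torus-fixed points of $V(\tau)$, while every node and every marked point is sent to a torus-fixed point. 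Recording the dual graph $g$, the fixed point hit by the component/node of each vertex (this is the colouring $\mathbf c$), the degree of the cover along each edge (the weight $\mathbf w$), and the distribution $\nu$ of the marks recovers exactly a decorated graph $\Gamma$; conversely each $\Gamma$ occurs, the corresponding locus being $F_\Gamma$. Hence $\overline M_{0,m}(X,\beta)^T=\bigsqcup_\Gamma F_\Gamma$.

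\textbf{Step 2 (structure of $F_\Gamma$).} Deforming a fixed stable map within its type $\Gamma$ only moves the contracted components, and a contracted component over a vertex $v$ with valence-plus-marks $n_v$ varies in $\overline M_{0,n_v}$ (it is rigid when $n_v\le 2$), the edge covers being rigid. Thus $F_\Gamma$ is the quotient of $\prod_{v\in V_g}\overline M_{0,n_v}$ by a finite group of order $a_\Gamma=|\mathrm{Aut}(\Gamma)|\prod_{e}\mathbf w(e)$, namely an extension of $\mathrm{Aut}(\Gamma)$ by the group $\prod_{e}\mathbb Z/\mathbf w(e)$ of automorphisms of the totally ramified edge covers. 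Consequently integration over $F_\Gamma$ of any class equals $\tfrac1{a_\Gamma}$ times the integral of its pullback over $\prod_v\overline M_{0,n_v}$, and $T$ acts trivially on $F_\Gamma$.

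\textbf{Step 3 (localization).} Because $P$ is a symmetric polynomial in the Chern classes of a $T$-equivariant vector bundle on $\overline M_{0,m}(X,\beta)$, it admits a canonical $T$-equivariant lift whose restriction to $F_\Gamma$ is $P^T(\Gamma)$, and the perfect obstruction theory of $\overline M_{0,m}(X,\beta)$ is $T$-equivariant; the virtual localization formula therefore gives
\begin{equation*}
\int_{\overline M_{0,m}(X,\beta)}P=\sum_\Gamma\int_{F_\Gamma}\frac{P^T(\Gamma)}{\ctopT(N_\Gamma)(\Gamma)},
\end{equation*}
where $N_\Gamma$ is the virtual normal bundle of $F_\Gamma$ and $\ctopT(N_\Gamma)(\Gamma)$ its equivariant Euler class. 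Rewriting each $\int_{F_\Gamma}$ via Step 2, which produces the factor $\tfrac1{a_\Gamma}$, yields \eqref{eq:ABt}. (When $X$ is convex the moduli space and all $F_\Gamma$ are smooth of the expected dimension, the virtual classes are the ordinary fundamental classes, and one recovers the classical Atiyah--Bott formula of \cite{AB}.)

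The step needing the most care is the analysis of the virtual tangent complex of $\overline M_{0,m}(X,\beta)$ restricted to $F_\Gamma$: one must split it into its fixed and moving parts, verify that the fixed part is (the obstruction theory of) $F_\Gamma$ so that the moving part is genuinely $N_\Gamma$, and express $\ctopT(N_\Gamma)(\Gamma)$ as an explicit product over the vertices, edges and flags of $\Gamma$ of equivariant Euler classes built from the $T$-weights of $X$ at the points $x_\sigma$ --- the contributions being the smoothing of the nodes and the $\psi$-classes at the special points of the contracted components, the deformations of the edge covers, and the vertex and edge pieces of $H^0$ and $H^1$ of $f^*T_X$. The bookkeeping must be arranged so that the automorphisms and the edge degrees combine to exactly $a_\Gamma$, with no spurious extra power of any $\mathbf w(e)$; each individual contribution is standard, but assembling them correctly is where the subtlety lies.
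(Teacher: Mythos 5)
Your outline is correct and coincides with the standard localization argument of Kontsevich, Graber--Pandharipande, and Spielberg, which is exactly what the paper invokes for this theorem (the paper gives no proof of its own, only the citations). The three steps you identify --- matching $T$-fixed stable maps with decorated graphs, realizing $F_\Gamma$ as the quotient of $\prod_v\overline M_{0,n_v}$ by a group of order $a_\Gamma$, and applying (virtual) equivariant localization with the fixed/moving splitting of the tangent complex --- are precisely the content of those references, and your flagged subtlety about assembling $\ctopT(N_\Gamma)$ is the part the paper later records explicitly in its formulas for $\Xi(\Gamma)$ and $\Psi(\Gamma)$.
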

See \cite{kontsevich1995enumeration,graber1999localization, MR1692603, MR3184181}.
Finally, we recall the definition of the small quantum cup product of $X$. % use Gromov--Witten invariants to define 
%the small quantum ring of $X$, denoted by $QH^s(X)$. 
Let $\{\theta_1,\ldots, \theta_t\}$ be a basis of the vector space $H^*(X,\mathbb Q)$ and let $\{\vartheta_1,\ldots, \vartheta_t\}$ be its dual basis with respect to the cup product. Given any two classes $\alpha_1,\alpha_2\in H^*(X,\mathbb Z)$ we define the product
\begin{equation}\label{eq:star_prod}
    \alpha_1 \star \alpha_2 = \alpha_1\cdot\alpha_2+
    \sum_{\beta}q^\beta\sum_{i=1}^t \vartheta_i\int_{\overline{M}_{0,3}(X,\beta)}
\ev^*_1(\alpha_1)\cdot\ev^*_2(\alpha_2)\cdot\ev^*_3(\theta_i).
\end{equation}
We extend it by linearity to the whole small quantum ring $QH^s(X)$.

\section{The algorithm}\label{sec:3}
In this section we describe the main features of our implementation of Equation~\eqref{eq:ABt}. The core part of the algorithm consists of four nested \verb!for! loops that generate the decorated graphs  $\Gamma=(g, \mathbf{c}, \mathbf{w}, \nu)$. We used the algorithm of \cite{McKay} in order to generate all trees. So that, the first \verb!for! loop runs among all trees with at most $p$ edges. The number $p$ corresponds to the maximal number of $T$-invariant curves $\{C_i\}_{i=1}^p$ such that
$$\beta = [C_1]+[C_2]+\cdots+[C_p].$$
Using the fact that the Mori cone is generated by $T$-invariant curves and it is dual to the cone of nef divisors \cite[\S6.3]{MR2810322}, we have that $p= \beta\cdot (M_1+\cdots+M_k)$ where $M_i$ are the classes of the generators of the nef cone.

The generation of $\mathbf{c}$ and $\nu$ modulo isomorphisms is made using standard properties of trees. In order to generate the functions $\mathbf{w}\colon E_g\rightarrow \mathbb Z$, we proceed in the following way. 
Let us fix a tree $g$ and a coloring $\mathbf{c}\colon V_g\rightarrow \Sigma^{(n)}$.
%, and a enumeration of the edges of $g$ $E_g=\{e_1,\ldots,e_r\}$. 
By definition, for any map $\mathbf{w}\colon E_g\rightarrow \mathbb Z$ we have
\begin{equation}\label{eq:b}
    \beta = \sum_{e\in E_g}\mathbf{w}(e)\cdot [V(\mathbf{c}(v)\cap \mathbf{c}(v'))].
\end{equation}
Thus, if $\bar e\in E_g$ is a fixed edge, since $\mathbf{w}(e)>0$ we have that
\begin{equation}\label{eq:w}
    \beta - \mathbf{w}(\bar e)\cdot [V(\mathbf{c}(v)\cap \mathbf{c}(v'))]-\sum_{e\in E_g\setminus\{\bar e\}} [V(\mathbf{c}(v)\cap \mathbf{c}(v'))]
\end{equation}
is an effective $1$-cycle. Thus, an upper bound for $\mathbf{w}(\bar e)$ is given by the number $b_{\bar e}$ that is the maximal number that, in place of $\mathbf{w}(\bar e)$, makes \eqref{eq:w} an effective cycle. The package \verb|Oscar.jl| provides all divisors that span the extremal rays of the nef cone\footnote{I thank Martin Bies for this feature.}, so \eqref{eq:w} is effective if and only if it intersects non negatively those nef divisors \cite[Theorem~6.3.20]{MR2810322}. Hence we can find $b_{\bar e}$. Now, we generate all possible functions $\mathbf{w}$ such that $1\le \mathbf{w}(e)\le b_{e}$ for all $e\in E_g$ and verifying \eqref{eq:b}.

In this way, we generate all decorated graphs. The equivariant cycles are implemented as Julia functions that take a graph as input and return a rational number. In order to implement \eqref{eq:ABt}, we sum all these rational numbers to a variable \verb|ans|. Eventually, %At the end of all sums, 
the value of \verb|ans| will be the value of the desired Gromow--Witten invariant.
% , as described in Algorithm \ref{alg:ABF}.

% \begin{algorithm}[h]\label{alg:ABF}
%   \caption{\textsc{The computation of the Atiyah--Bott Formula}}
%     \LinesNumbered
%   \SetKw{Set}{Set}
%   \KwIn{$P$: Equivariant class\\
%   \hspace{1.2cm} $X$: smooth projective toric variety\\
%   \hspace{1.2cm} $\beta$: effective $1$-cycle\\
%   \hspace{1.2cm} $m$: a non negative integer}
  
%   \KwOut{The degree of $P$ in $\overline M_{0,m}(X,\beta)$}
%   \BlankLine
%   \Set{$s:$ array of $|\Sigma^{(1)}|$ random 
%         integers}.\\
% 	\Set{$\mathtt{ans}=0$}\\
%   \For{$g$ a tree with $1,\ldots,p$ edges}
%   {
%     \For{$\mathbf{c}$ a coloring of $g$ with at most $|\Sigma^{(n)}|$ colors}
%     {
%         \For{$\nu$ a marking of $g$ with $m$ marks}
%         {
%         \Set{$\mathtt{aut}:$ number of automorphisms of $g$ preserving $\mathbf{c},\nu$} \\
%             \For{$\mathbf w$ a set of multiplicities of edges of $g$}
%             {   
%                 \Set{$\mathtt{ans} = \mathtt{ans} + P(g,\mathbf{c},\mathbf{w},\nu,s)/(Euler(g,c,w,\nu,s)\cdot\mbox{aut}\cdot\mbox{prod}(w))$}
%         }
%     }
% }
% }
% \KwRet{ans}  
%   \end{algorithm}
For completeness, we list the explicit formulas of the equivariant classes we used, whose proofs can be found in \cite{kontsevich1995enumeration,cox1999mirror,MR1692603,MR1928909,MR3184181,MR4634985,MR4383164}.
Some proofs are worked out only for $\mathbb P^n$, but they extend tautologically for any smooth projective toric variety. We try to be as clear and simple as possible. We use the notation introduced in \ref{notat:toric}. The following is Lemma 6.10 in Spielberg's thesis. 
\begin{lem}
    Let $\sigma_1,\sigma_2\in\Sigma^{(n)}$ be such that $\tau=\sigma_1\cap\sigma_2\in\Sigma^{(n-1)}$. 
    
    Let $\{\rho_{i_1},\ldots,\rho_{i_{n-1}}\}$ be the rays of $\tau$. Moreover, let $\rho'$ be the ray of $\sigma_1$ that is not in $\tau$. 
    Fix $\omega_1,\ldots,\omega_r$ weights of the action of $(\mathbb C^*)^r$ on 
    $\mathbb C^r$. 
    The induced $\mathbb C^*$-action on the subvariety
    $V(\tau)$ has weight 
    $\omega^{\sigma_1}_{\sigma_2}$ at the point $V(\sigma_1)$ given by:
    $$\omega^{\sigma_1}_{\sigma_2}:=\sum_{i=1}^r\langle\rho_i,u_n\rangle\omega_i$$
where $\{u_1,\ldots,u_n\}$ is the $\mathbb Z$-dual basis of $\{\rho_{i_1},\ldots,\rho_{i_{n-1}},\rho'\}\subset\mathbb Z^r$.
\end{lem}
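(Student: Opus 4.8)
The plan is to reduce the computation to the smooth affine toric chart $U_{\sigma_1}$ attached to the maximal cone $\sigma_1$, where the fixed point $V(\sigma_1)$ becomes the origin, and then to recognise $V(\tau)$ inside this chart as a coordinate axis whose tangent weight can be read off directly. First I would use that $X$ is smooth: the cone $\sigma_1$ is then smooth, so its ray generators $\rho_{i_1},\ldots,\rho_{i_{n-1}},\rho'$ form a $\mathbb Z$-basis of $\mathbb Z^n$, and $\{u_1,\ldots,u_n\}$ is exactly the dual basis of the character lattice. Consequently $U_{\sigma_1}=\mathrm{Spec}\,\mathbb C[\chi^{u_1},\ldots,\chi^{u_n}]\cong\mathbb C^n$, with coordinates $x_j:=\chi^{u_j}$, and $V(\sigma_1)$ is the origin.

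Next I would identify $V(\tau)\cap U_{\sigma_1}$. By the orbit--cone correspondence it is the closure of the orbit $O(\tau)$, and in the coordinates $x_1,\ldots,x_n$ this closure is the locus $\{x_1=\cdots=x_{n-1}=0\}$: for $j\le n-1$ the character $u_j$ is not orthogonal to $\tau$ since $\langle\rho_{i_j},u_j\rangle=1$, whereas $u_n$ is orthogonal to $\tau$ since $\langle\rho_{i_k},u_n\rangle=0$ for every $k\le n-1$. Hence $V(\tau)$ is the $x_n$-axis in this chart, so $T_{V(\sigma_1)}V(\tau)$ is the line spanned by $\partial/\partial x_n$. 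The torus acts on the coordinate $x_n=\chi^{u_n}$ through the character $u_n$, and therefore, with the sign convention for tangent weights adopted in loc.\ cit.\ (under which the weight at a fixed point agrees with the weight of the linear coordinate), the weight of the induced action on $T_{V(\sigma_1)}V(\tau)$ is the image of $u_n$ in the relevant weight lattice.

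It then remains to express this weight in the variables $\omega_1,\ldots,\omega_r$. These are the standard characters of the $(\mathbb C^*)^r$-action on $\mathbb C^r$ in the Cox construction, and the torus action on $X$ is induced from it via the surjection $(\mathbb C^*)^r\twoheadrightarrow T$ dual to the inclusion of character lattices sending a character $m$ to $(\langle\rho_i,m\rangle)_{i=1}^r$, i.e.\ the first map of the exact sequence $0\to\mathbb Z^n\to\mathbb Z^r\to\mathrm{Cl}(X)\to 0$. Pulling back, a character $m$ becomes $\sum_{i=1}^r\langle\rho_i,m\rangle\,\omega_i$; taking $m=u_n$ yields precisely $\omega^{\sigma_1}_{\sigma_2}=\sum_{i=1}^r\langle\rho_i,u_n\rangle\,\omega_i$, as claimed. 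I expect the only genuinely delicate points to be bookkeeping ones: keeping the sign conventions consistent between the weight of a function and the weight of a tangent vector, and making sure that it is $u_n$, the basis vector dual to the ray $\rho'$ of $\sigma_1$ not contained in $\tau$, that enters the formula; once these are pinned down the statement follows.
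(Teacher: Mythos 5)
Your argument is correct and is essentially the standard proof of this statement: the paper itself does not prove the lemma but cites it as Lemma~6.10 of Spielberg's thesis, where the computation is carried out in the same way — pass to the smooth affine chart $U_{\sigma_1}\cong\mathbb C^n$ with coordinates $\chi^{u_j}$, identify $V(\tau)$ as the coordinate axis cut out by the $x_j$ with $u_j\notin\tau^{\perp}$, read off the tangent weight $u_n$ at the fixed point, and pull it back along the Cox quotient $(\mathbb C^*)^r\twoheadrightarrow T$ to obtain $\sum_i\langle\rho_i,u_n\rangle\omega_i$. The only caveats are the ones you already flag (sign conventions relating the weight on the coordinate function to the weight on the tangent line, and which dual basis vector survives), so nothing is missing.
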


Suppose that we fix a decorated graph $\Gamma$. A flag of a vertex $v$ is the pair $F=(v,e)$ where $e$ is an edge of $g$ containing $v$. We denote by $F_v$ the set of flags of $v$. Let $d=\mathbf{w}(e)$, $\sigma_1$ be the color of $v$, and $\sigma_2$ be the color of the other vertex of $e$. We denote by 
%$$\omega_F:=\frac{1}{\mathbf{w}(e)}\omega^{\mathbf{c}(v)}_{\mathbf{c}(v')}.$$
$$\omega_F:=\frac{1}{d}\omega^{\sigma_1}_{\sigma_2}.$$
Given $\gamma\in\sigma_1^*$ we denote by $\lambda^\gamma_e$ the degree of the class $d[V(\sigma_1\cap\sigma_2)]\cdot [V(\rho)]$, where $\rho$ is the ray of $\sigma_1$ that is not a ray of $\gamma$. We define by
\begin{eqnarray*}
    \Delta(e)&:=& \frac{(-1)^dd^{2d}}{(d!)^2(\omega^{\sigma_1}_{\sigma_2})^{2d}}\prod_{\gamma\in\sigma_1^*\setminus\{\sigma_2\}}\prod_{i \in K}\left(\omega^{\sigma_1}_\gamma-\frac{i}{d}\omega^{\sigma_1}_{\sigma_2} \right)^{-\mathrm{sgn}(\lambda^\gamma_e+1)} \\
    \Xi(\Gamma) &:=& \prod_{v\in V_g}\left(\prod_{\gamma\in\mathbf{c}(v)^*}\omega^{\mathbf{c}(v)}_\gamma \right)^{|F_v|-1}\prod_{e\in E_g}\Delta(e),
\end{eqnarray*}
where $K=\{0,\ldots, \lambda^\gamma_e\}$ if $\lambda^\gamma_e\ge0$, $K=\{\lambda^\gamma_e+1,\ldots,-1\}$ if $\lambda^\gamma_e\le-2$, and $K=\emptyset$ otherwise. Let $\{a_i\}_{i=1}^m$ be non negative integers.
Let $S_v:=\nu^{-1}(v)=\{j_1,...,j_k\}$ denote the set of marks mapped to $v$, and let $\overline{S_v}:=\sum_{i\in S_v}a_i$ and $n(v):=|F_v|+|S_v|$. If $m=0$ or $S_v=\emptyset$, we set $\overline{S_v}:=0$. If $n(v)-3\ge\overline{S_v}$, we recall the multinomial coefficients
\begin{equation*}
    \binom{n(v)-3}{n(v)-3-\overline{S_v},a_{j_1},...,a_{j_k}} = \frac{(n(v)-3)!}{(n(v)-3-\overline{S_v})!\cdot a_{j_1}!\cdots a_{j_k}!}.
\end{equation*}
We may extend the multinomial in the following way:
\begin{itemize}
    \item if $\overline{S_v}>n(v)-3\ge0$, then its value is $0$;
    \item if $\overline{S_v}=0$, then its value is $1$;
    \item if $S_v=\{a_{j_1}\}$ and $n(v)=2$, then its value is $(-1)^{a_{j_1}}$.
\end{itemize}
Let us denote by $\Psi$ the following class
\begin{eqnarray}\label{eq:Psi}
\Psi(\Gamma) & :=& \prod_{v\in V_g} \binom{n(v)-3}{n(v)-3-\overline{S_v},a_{j_1},...,a_{j_k} }
\left(\sum_{F\in F_v}\omega_F^{-1} \right)^{-\overline{S_v}}.
\end{eqnarray}

Finally, the equivariant class of the inverse of $\ctopT(N_\Gamma)$ times the $\psi$-classes is
\begin{eqnarray}\label{eq:ctopKont}
\frac{[\psi_1^{a_1}\cdots \psi_m^{a_m}]^T(\Gamma)}{\ctopT(N_{\Gamma})(\Gamma)} & = & \Psi(\Gamma) \Xi(\Gamma)\prod_{v\in V_g}\prod_{F\in F_v}\omega_F^{-1}\left(\sum_{F\in F_v}\omega_F^{-1} \right)^{n(v)-3}.
\end{eqnarray}

In the package, we split \eqref{eq:ctopKont} in two different functions called \verb!Psi! and \verb|Euler_inv|. The first one implements \eqref{eq:Psi}, the latter implements \eqref{eq:ctopKont} when all $a_i$ are trivial.

So, we clarify the contribution of $\psi$ classes and $\ctop(N_\Gamma)$ to \eqref{eq:ABt}. Let us see the other classes. 

For any maximal cone $\sigma\in\Sigma^{(n)}$, for any ray $\rho\in\Sigma^{(1)}$, and for any non negative integer $k$, we define the following 
\begin{equation*}
    \lambda(\sigma, \rho, k) := \begin{cases}
  0  & \text{if $\rho$ is not a ray of $\sigma$, and $k>0$}, \\
  1 & \text{if $\rho$ is not a ray of $\sigma$, and $k=0$}, \\
  (\omega^{\sigma}_\gamma)^k & \text{if $\gamma\in\sigma^*$, and $\rho$ is a ray of $\sigma$ but not of $\gamma$.}
\end{cases}
\end{equation*}
The cone $\gamma$ with the property of the last case exists and it is unique, so the definition makes sense.
Let $Z$ be the following cohomology class:
\begin{equation}\label{eq:Z}
    Z=[V(\rho_1)]^{k_1}\cdot [V(\rho_2)]^{k_2}\cdots [V(\rho_r)]^{k_r}.
\end{equation}
For any maximal cone $\sigma$, let us define
\begin{equation}\label{eq:lambda}
    \Lambda(\sigma, Z):= \prod_{j=1}^r \lambda(\sigma,\rho_j,k_j).
\end{equation}
Since the cohomology ring of $X$ is generated by invariant divisors \cite[Chapter~12]{MR2810322}, we may extend \eqref{eq:lambda} by linearity to any homogeneous class $Z\in H^*(X,\mathbb Q)$.

For any $i=1,\ldots,m$ the contribution of $\ev_i^*(Z)$ is
\begin{equation}\label{eq:ev}
\ev_i^*(Z)^T(\Gamma)=\Lambda(\mathbf{c}(\nu(i)),Z).
\end{equation}
Let $M$ be a line bundle of $X$. We extend \eqref{eq:lambda} and \eqref{eq:ev} to $M$ by taking $Z=c_1(M)$.
%$\ev_i^*(M)^T(\Gamma):=\ev_i^*(c_1(M))^T(\Gamma)$.
Finally, let us assume that $h^1(X,M)=0$. %$c_1(M)\cdot \beta\ge0$. 
We define the integer
$$M_e:=d(c_1(M)\cdot[V(\mathbf{c}(v)\cap \mathbf{c}(v'))]).$$
The equivariant class of the Euler class of the bundle $\delta_*(\ev_{m+1}^*(M))$ is
\begin{multline*}
    \ctopT(\delta_*(\ev_{m+1}^*(M)))(\Gamma)=\prod_{v\in V_g, |F_v|\neq 1}\Lambda(\mathbf{c}(v),M)^{1-|F_v|} \\
    \times\left(\prod_{e\in E_g, M_e>0}
\prod_{\alpha=0}^{M_e}\frac{\alpha \Lambda(\mathbf{c}(v),M)-(M_e-\alpha)\Lambda(\mathbf{c}(v'),M)}{M_e}\right).
\end{multline*}

% the Gromov--Witten invariant
% \begin{equation*}
%    \int_{\overline{M}_{0,m}(X,\beta)}\ev_{1}^{*}(\gamma_{1})\cdot\psi^{a_1}\cdots\ev_{m}^{*}(\Gamma_{m})\cdot\psi^{a_m}\cdot c_{\mathrm{top}}(\delta_*(\ev_{m+1}^*(M))
% \end{equation*}
% is called 
%Finally, let $M$ and $M'$ two line bundle of $X$. Let us assume that $M$ (resp., $M'$) satisfies $c_1(M)\cdot \beta\ge0$ (resp., $c_1(M')\cdot \beta\ge0$). Then

\begin{rem}
    The algorithm we described, as well as the examples in the next section, are relative to Version 1.0.0 of the package and Version 0.13.0 of \verb|Oscar.jl|. Some part of the code may be improved in the future. We refer to the internal documentation of the package for an up-to-date explanation of its features.
\end{rem}
\begin{rem}
    Let us consider the toric variety $X = \mathbb{P}^n$. Every cohomology class is an integer multiple of a unique class (namely, the class of a linear subspace). This implies that the generation of all maps $\mathbf{w}\colon E_g \rightarrow \mathbb{Z}$ is equivalent to finding the partition of an integer. Moreover, since any two maximal cones share a facet, the condition $\mathbf{c}(v) \cap \mathbf{c}(v') \in \Sigma^{(n-1)}$ on a coloring $\mathbf{c}\colon V_g \rightarrow \Sigma^{(n)}$ can be replaced with the simpler condition $\mathbf{c}(v) \neq \mathbf{c}(v')$. Finally, one can see that $\omega_{\sigma_2}^{\sigma_1} = \omega_{i_1} - \omega_{i_2}$, where the ray $\rho_{i_1}$ (resp., $\rho_{i_2}$) is the unique ray not in $\sigma_1$ (resp., $\sigma_2$).

    Csaba Schneider and the author created a package, similar to the present one, for computations of Gromov--Witten invariants in projective spaces, see \cite{MR4383164}. That package is faster because it uses the simplifications we explained. In other words, it does not deal with tedious computations with the polyhedral fan and the cohomology ring of $X$, but only with classical combinatorial functions on integers. It is also easier to install as it does not require \verb|Oscar.jl|. Of course, all computations of that package are supported by this one.
% It follows that for projective spaces, classical combinatorial functions on integers are enough for implementing the algorithm. In other words, there is no need to consider the polyhedral fan of $X$. Csaba Schneider and the author created a package similar to the present one for computations of Gromov--Witten invariants in projective spaces, see \cite{MR4383164}. Of course, all computations of that package are supported by this one. However, that package does not require \verb|Oscar.jl|, so it is faster and easier to install.
% The same is true for all line bundles. This makes the generation of colorings and multiplicities (points (2) and (3) of Definition~?) much easier. 
% %%%
% Csaba Schneider and the author created a similar package for computations of Gromov--Witten invariants in projective spaces, see \cite{MR4383164}. The projective spaces are toric, so all computations of that package are supported by this one. However, that package does not require \verb|Oscar.jl|, so it is faster and easier to install. 
% We suggest to use the old package for computations in $\mathbb P^n$.
\end{rem}
\begin{rem}
    In the published version of this paper \cite{MR4739375}, $\Xi(\Gamma)$ is erroneously reported without the exponent $|F_v|-1$.
\end{rem}

\section{Explicit computations}
In this section we use our package to perform some explicit computations, in order to show its functionalities. For more examples, please consult the internal documentation of the package. 
The typical use of the package is the following. First, one defines the variety $X$ and the class $\beta$ using \verb!Oscar.jl!, then defines the function \verb!P! by combining the functions that are built-in functions of the package. Finally, one calls \verb!IntegrateAB(X, beta, m, P)! to get the result of the integration.
We provided our package with the following functions.
\begin{itemize}
%    \item \verb|proj(E)| returns the projective bundle\footnote{This function may be integrated into {\tt Oscar.jl} in the future.} $\mathbb P(E)$ where $E$ is a sequence of line bundles on $X$.
    \item \verb!a_point(X)! returns the cohomology class $[\mathrm{pt}]$ of a point of $X$.
    \item \verb|moment_graph(X)| returns the moment graph of $X$.
\end{itemize}
For example, if \verb|mg=moment_graph(X)|, then \verb|mg[1,2]| is the cohomology class of the $T$-invariant curve $V(\sigma)\cap V(\sigma')$, where $\sigma,\sigma'$ are the first two elements of $\Sigma^{(n)}$.
\subsection{Enumeration of curves}%\subsubsection{PP}
Let $h=c_1(\mathcal{O}_{\mathbb P^3}(1))$. The following computation
\begin{equation*}
    \int_{\overline{M}_{0,5}(\mathbb{P}^{3},2)}
\ev_1^*(h^3)\cdot\ev_2^*(h^3)\cdot\ev_3^*(h^3)\cdot\ev_4^*(h^2)\cdot\ev_5^*(h^2)
\end{equation*}
equals the number of conics in $\mathbb{P}^3$ passing through three points and two lines in general position (see \cite[Lemma~14]{FP}). It can be computed in the following way
\begin{verbatim}
P3 = projective_space(NormalToricVariety, 3); # the space P^3.
h = cohomology_class(toric_line_bundle(P3, [1]));
P = ev(1, h^3)*ev(2, h^3)*ev(3, h^3)*ev(4, h^2)*ev(5, h^2);
IntegrateAB(P3, 2*h^2, 5, P);
Result: 1
\end{verbatim}
Let $d\ge0$ and $e\le0$ such that $2d+e>0$. We want to compute the numbers of degree $d$ rational curves in $\mathbb P^3$ meeting $2d+e$ generic points, and in addition passing through a fixed point with global multiplicity $-e$. By \cite[Example~8.2]{MR1832328} we know that such a number is
\begin{equation*}
    \int_{\overline{M}_{0,2d+e}(X,\beta)}\prod_{i=1}^{2d+e}
\ev_i^*([\mathrm{pt}]),
\end{equation*}
where $\pi\colon X\rightarrow\mathbb{P}^3$ is the blow-up at one point, $H=\pi^*(h^2)$, $E$ is the class
of a line in the exceptional divisor, and finally $\beta=dH+eE$. This invariant can be computed using the package in the following way.
\begin{verbatim}
X = domain(blow_up(P3, ideal(gens(cox_ring(P3))[1:3])));
mg = moment_graph(X);
H = mg[4,5];
E = mg[1,2];
d = 1; # this can be any non negative integer
e = -1; # this can be any non positive integer
beta = d*H+e*E;
P = prod(i -> ev(i, a_point(X)), 1:(2*d+e)); # we require 2d+e>0
IntegrateAB(X, beta, 2*d+e, P);
Result: 1
\end{verbatim}
In the Gromov--Witten invariant \eqref{eq:GW}, the cohomology class $\psi_1^{a_1}\cdots\psi_m^{a_m}$ is implemented using the command \verb|Psi(a_1,...,a_m)|. We may use this to compute the number of curves with tangency conditions. For example, the number of lines in $\mathbb P^3$ passing through a general point of a quartic surface with multiplicity $3$ is
\begin{equation*}
    \int_{\overline{M}_{0,1}(\mathbb{P}^{3},1)}
\frac{1}{4}\left (\ev_1^*(4h)^3+ 3\ev_1^*(4h)^2\psi_1+ 2\ev_1^*(4h)\psi^2_1\right )\ev_1^*(h)^2=2,
\end{equation*}
see \cite{MR3773793,MR4332489,MR4256011}. We compute it using the following code.
\begin{verbatim}
l = 4*h;
P = 1//4*(ev(1,l)^3+3*ev(1,l)^2*Psi(1)+2*ev(1,l)*Psi(2))*ev(1,h)^2;
IntegrateAB(P3, h^2, 1, P);
Result: 2
\end{verbatim}

\subsection{Quantum cohomology ring of a threefold} In \cite[Section~9.3]{MR1692603}, Spielberg computed the small quantum cohomology ring of 
$X=\mathbb{P}(\mathcal{O}_{\mathbb{P}^2}\oplus\mathcal{O}_{\mathbb{P}^2}(2))$. His strategy was the following. Among all Gromov--Witten invariants that appear in the definition of quantum product, only a finite number are non zero. Using some clever arguments he can list all of them. After that, he computes each one by hand, using Equation~\eqref{eq:ABt}. In this subsection we will use our code in order to get the computations he made by hand. In order to define the toric variety $X$ using \verb!Oscar.jl!, first we list its rays and maximal cones, and then we use the command \verb!normal_toric_variety!. We use Spielberg's notation instead of that of Example~\ref{example:P(P)}.
\begin{verbatim}
e1 = [1,0,0]; e2 = [0,1,0]; e3 = [0,0,1];
ray_gens = [e1, -e1, e2, e3, -e2-e3-2*e1];
max_cones = [[1,3,4],[1,3,5],[1,4,5],[2,3,4],[2,3,5],[2,4,5]];
X=normal_toric_variety(ray_gens,max_cones;non_redundant=true);
\end{verbatim}
Let us define an array containing the generators of $H^*(X,\mathbb C)$,
\begin{verbatim}
Z = [cohomology_class(X, Z) for Z in gens(cohomology_ring(X))];
\end{verbatim}
With this notation, the cohomology ring of $X$ is
\begin{equation*}
    H^*(X,\mathbb C) = \frac{\mathbb C[Z_1,Z_2,Z_3,Z_4,Z_5]}{(Z_1-Z_2-2Z_5, Z_3-Z_4, Z_3-Z_5,Z_1Z_2,Z_3Z_4Z_5)}
\end{equation*}
The generators of the Mori cone, that in the paper are denoted by $\lambda_1$ and $\lambda_2$, are
\begin{verbatim}
mg = moment_graph(X);
lambda1 = mg[1,4];
lambda2 = mg[4,5];
\end{verbatim}
The first non zero invariant computed by Spielberg is
\begin{equation*}
    \int_{\overline{M}_{0,3}(X,\lambda_2)}
\ev_1^*(Z_3)\cdot\ev_2^*(Z_3)\cdot\ev_3^*(Z_4Z_5)=-1.
\end{equation*}
In his paper it is denoted by $\Phi^{\lambda_2}(Z_3,Z_3,Z_4Z_5)$. In our package this is computed using the following lines:
\begin{verbatim}
P = ev(1, Z[3])*ev(2, Z[3])*ev(3, Z[4]*Z[5]);
IntegrateAB(X, lambda2, 3, P);
\end{verbatim}
The result is as expected. The next non zero invariant is
\begin{equation*}
    \Phi^{\lambda_1}(Z_1,Z_1,Z_2Z_3Z_4)=\int_{\overline{M}_{0,3}(X,\lambda_2)}
\ev_1^*(Z_1)\cdot\ev_2^*(Z_1)\cdot\ev_3^*(Z_2Z_3Z_4)=1.
\end{equation*}
Again, the same computation with our code gives the same result:
\begin{verbatim}
P = ev(1, Z[1])*ev(2, Z[1])*ev(3, Z[2]*Z[3]*Z[4]);
IntegrateAB(X, lambda1, 3, P);
Result: 1
\end{verbatim}
We tested all computations in Spielberg's tesis. Those computations, as well as all computations in this article, are publicly available for reader's convenience\footnote{The repository is \url{https://github.com/mgemath/ToricAB_computations}.}.

\subsection{Quantum cohomology ring of a fourfold}
Spielberg's strategy works for any toric Fano variety even if the number of necessary computations grows. Take the toric Fano fourfold $X=\mathbb{P}(\mathcal{O}_{\mathbb{P}^3}\oplus\mathcal{O}_{\mathbb{P}^3}(1))$, which appears in Batyrev's classification \cite{MR1703904,MR1772804}. We implements $X$ in the following way.
\begin{verbatim}
X = proj(toric_line_bundle(P3, [0]), toric_line_bundle(P3, [1]));
\end{verbatim}
Take two classes $\alpha_1,\alpha_2\in H^2(X,\mathbb Z)$ and the class of a curve $\beta$. By Equation~\eqref{eq:virdim}, a necessary condition for the invariant
\begin{equation}\label{eq:invariant}
    \int_{\overline{M}_{0,3}(X,\beta)}
\ev_1^*(\alpha_1)\cdot\ev_2^*(\alpha_2)\cdot\ev_3^*(\theta)
\end{equation}
to be non zero is that the codimension of the class $\theta$ is equal to $2-\deg(c_1(\Omega_X)\cdot\beta)$. Thus the codimension of $\theta$ is $3$ or $4$. If it is $3$, then $-\deg(c_1(\Omega_X)\cdot\beta)=1$. But this is impossible since the Mori cone of $X$ is generated by the classes $\lambda_1$ and $\lambda_2$ where
\begin{verbatim}
mg = moment_graph(X);
lambda1 = mg[1,2];
lambda2 = mg[4,8];
\end{verbatim}
and a direct computation shows $-\deg(c_1(\Omega_X)\cdot\lambda_1)=2$, and $-\deg(c_1(\Omega_X)\cdot\lambda_2)=3$.\newline
So that, the invariant \eqref{eq:invariant} is non zero necessary if $\theta$ has codimension $4$ (in particular, it is the class of a point) and $\beta = \lambda_1$. The usual cohomology ring of $X$ is
\begin{equation*}
    H^*(X,\mathbb C) = \frac{\mathbb C[Z_1,Z_2,Z_3,Z_4,Z_5,Z_6]}{(Z_1 - Z_6, Z_2 - Z_6, Z_3 - Z_6, Z_1 + Z_4 - Z_5, Z_4Z_5, Z_1Z_2Z_3Z_6)}
\end{equation*}
As before, we implement it using the following command.
\begin{verbatim}
Z = [cohomology_class(X, Z) for Z in gens(cohomology_ring(X))];
\end{verbatim}
We want to compute $Z_4\star Z_5$. By Equation~\eqref{eq:star_prod}, we have
$$
Z_4\star Z_5 = Z_4\cdot Z_5+q^{\lambda_1} \int_{\overline{M}_{0,3}(X,\lambda_1)}
\ev_1^*(Z_4)\cdot\ev_2^*(Z_5)\cdot\ev_3^*([\mathrm{pt}]).
$$
The code to compute it is the following:
\begin{verbatim}
P = ev(1, Z[4])*ev(2, Z[5])*ev(3, a_point(X));
IntegrateAB(X, lambda1, 3, P);
Result: 1
\end{verbatim}
Thus $Z_4\star Z_5 = q^{\lambda_1}$. Following this path one can recover the entire multiplication table of the ring $QH^s(X)$.

\subsection{Twisted Gromov--Witten invariants}
Let $M$ be a line bundle on $X$ with $h^1(X,M)=0$. The integral
\begin{equation*}
    \int_{\overline{M}_{0,m}(X,\beta)}\ev_{1}^{*}(\gamma_{1})\cdots\ev_{m}^{*}(\gamma_{m})\cdot \psi_1^{a_1}\cdots\psi_m^{a_m}\cdot \ctop(\delta_*(\ev^*_{m+1}(M)))
\end{equation*}
gives Gromov--Witten 
invariants
of a subvariety of $X$  
cut out by a section of $M$ (see, e.g., \cite[Example~B]{MR2510741}). In the package, in order to insert $\ctop(\delta_*(\ev^*_{m+1}(M)))$ we use the command \verb!push_ev(M)!.

For example, take $X=\mathbb{P}(\mathcal{O}_{\mathbb{P}^3}\oplus\mathcal{O}_{\mathbb{P}^3}(1))$ that we used in the last subsection, and $M=\det(\Omega_X)^\vee$ the anticanonical bundle. A section of $M$ cuts out a Calabi--Yau threefold and its Gromov--Witten invariants
% \begin{align*}
% \int_{\overline{M}_{0,0}(X,\lambda_1)}c_3(\delta_*(\ev^*_{1}(M)))&=60,        &  \int_{\overline{M}_{0,0}(X,\lambda_2)}c_4(\delta_*(\ev^*_{1}(M)))&=27,
% \end{align*}
\begin{align*}
\int_{\overline{M}_{0,1}(X,\lambda_1)}\psi_1^1c_3(\delta_*(\ev^*_{2}(M)))&=-120,        &  \int_{\overline{M}_{0,0}(X,\lambda_2)}c_4(\delta_*(\ev^*_{1}(M)))&=27,
\end{align*}
can be computed in the following way.
%of the Calabi-Yau threefold given by a section of $M$ can be computed in the following way.
\begin{verbatim}
M = anticanonical_bundle(X);
P = push_ev(M);
IntegrateAB(X, lambda1, 1, Psi(1)*P);
Result: -120
IntegrateAB(X, lambda2, 0, P);
Result: 27
\end{verbatim}

% \subsection{Psi-classes}
% In the Gromov--Witten invariant \eqref{eq:GW}, the cohomology class $\psi_1^{a_1}\cdots\psi_m^{a_m}$ is implemented using the command \verb|Psi([a_1,...,a_m])|. For example, using again $X=\mathbb{P}(\mathcal{O}_{\mathbb{P}^3}\oplus\mathcal{O}_{\mathbb{P}^3}(1))$, in order to compute
% $$
% \int_{\overline{M}_{0,1}(X,\lambda_1)}\psi_1^1c_3(\delta_*(\ev^*_{2}(M)))=-120,
% $$
% we use the following code.
% \begin{verbatim}
% P = Psi([1])*push_ev(M);
% IntegrateAB(X, lambda1, 1, P);
% Result: -120
% \end{verbatim}
We refer to the page on Github for a list of all 
implemented classes.

\bibliographystyle{amsalpha}
\bibliography{computations}

\end{document}